  \renewcommand\appendix{\par
  \setcounter{section}{0}
  \setcounter{subsection}{0}
  \setcounter{figure}{0}
  \setcounter{table}{0}
  \renewcommand\thesection{ Appendix \Alph{section}}
  \renewcommand\thefigure{\Alph{section}\arabic{figure}}
  \renewcommand\thetable{\Alph{section}\arabic{table}}
}
\pgfplotsset{compat=newest}
\theoremstyle{remark}
\newtheorem{claime}{Claim}
\begin{document}


\title{A New Upper Bound on Total Domination Number of Bipartite Graphs}


%
%
\author{SAIEED AKBARI \and  POOYAN EHSANI \and SAHAR QAJAR \and ALI SHAMELI \and HADI YAMI}






%


%

\maketitle

\begin{abstract}
Let $ G $ be a graph. A subset $S \subseteq V(G) $ is called a total dominating set if every vertex of $G$ is adjacent to at least one vertex of $S$. The total domination number, $\gamma_{t}$($G$), is the minimum cardinality of a total dominating set of $G$. In this paper using a greedy algorithm we provide an upper bound for $\gamma_{t}$($G$), whenever $G$ is a bipartite graph and $\delta(G)$ $\geq$ $k$. More precisely, we show that if $k$ > 1 is a natural number, then for every bipartite graph $G$ of order $n$ and $\delta(G) \ge k$, $ $$\gamma_{t}$($G$) $\leq$ $n(1- \frac{k!}{\prod_{i=0}^{k-1}(\frac{k}{k-1}+i)}).$ 
\end{abstract}
\section{Introduction}
Let $G$ be a graph. In this paper all graphs are simple with no multiple edges and loops. A $dominating$ $set$ for a graph $G = (V, E)$ is a subset $D$ of $V$ such that every vertex in $V \setminus D$ is adjacent to at least one vertex of $D$.  The $domination$ $number$ $\gamma(G)$ is the number of vertices in a smallest dominating set of $G$.

A $total$ $dominating$ $set$ is a set of vertices such that all vertices in the graph (including the vertices in the total dominating set themselves) have a  neighbor in the total dominating set. The $total$ $domination$ $number$ $\gamma_{t}(G)$, is the number of vertices in a smallest total dominating set of $G$. We denote the minimum degree of $G$ by $\delta(G)$.

Domination problems came from chess. In the 1850s, several chess players were interested in the
minimum number of queens such that every square on the chess board either contains
a queen or is attacked by a queen. Domination has several applications. One of them is
facility location problems, where the number of facilities (e.g.,
hospitals, fire stations, ...) is fixed and one attempts to minimize the distance that a
person needs to travel to get to the closest facility. Another application is finding sets of representatives, in monitoring communication or
electrical networks, and in land surveying (e.g., minimizing the number of places a
surveyor must stand in order to take height measurements for an entire region).
In \cite{FundDom}, the authors cite more than 75 variations of domination.

Total domination in graphs was first introduced by Cockayne, Dawas, and Hedetniemi \cite{origintotal,Henning} and is now very popular in graph theory. There are two outstanding domination books by Haynes, Hedetniemi, and Slater \cite{FundDom, Dom, Henning} who have gathered and unified all results regarding this subject through 1200 domination papers at that time.
  
 The following interesting result is due to Alon, see \cite{Alon}.
\theoremstyle{plain} \newtheorem{theo}{Theorem}
\begin{theo}{\rm \cite{Alon}}
Let $G = (V, E)$ be a graph of order n, with  $\delta(G) > 1$. Then $G$ has a dominating set of size at most $n(\frac{1+\ln (\delta(G)+1)}{\delta(G)+1})$.
\end{theo}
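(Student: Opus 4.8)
The plan is to prove Alon's theorem by the probabilistic method, using a two-stage random construction. Write $\delta = \delta(G)$ and fix a parameter $p \in [0,1]$ to be optimized at the end. First I would build a random set $A \subseteq V$ by placing each vertex into $A$ independently with probability $p$. Such an $A$ will typically fail to dominate a small number of vertices, so the second stage repairs it: let $Y$ be the set of all vertices lying in $V \setminus A$ that have no neighbour in $A$, and define the candidate dominating set $D = A \cup Y$. The whole proof then amounts to showing $D$ is always dominating while its \emph{expected} cardinality meets the claimed bound.

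The first key step is to check that $D$ is a dominating set for every outcome of the random experiment. Indeed, if a vertex $v \notin D$, then $v \notin A$ and $v \notin Y$; by the very definition of $Y$, the condition $v \notin Y$ forces $v$ to possess a neighbour in $A \subseteq D$. Hence every vertex outside $D$ is adjacent to a vertex of $D$, so $D$ dominates $G$. The second step is to bound $\mathbb{E}[|D|] \le \mathbb{E}[|A|] + \mathbb{E}[|Y|]$ by linearity of expectation. Clearly $\mathbb{E}[|A|] = np$. For $Y$, a vertex $v$ lies in $Y$ precisely when neither $v$ nor any of its $\deg(v)$ neighbours is selected, an event of probability $(1-p)^{1+\deg(v)} \le (1-p)^{1+\delta}$ since $\deg(v) \ge \delta$. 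Invoking $1 - p \le e^{-p}$ gives $\mathbb{E}[|Y|] \le n(1-p)^{1+\delta} \le n\,e^{-p(1+\delta)}$, whence $\mathbb{E}[|D|] \le np + n\,e^{-p(1+\delta)}$.

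The final step is a one-variable optimization of $f(p) = np + n\,e^{-p(1+\delta)}$. Setting $f'(p)=0$ yields $e^{-p(1+\delta)} = 1/(1+\delta)$, i.e.\ the optimal value $p = \frac{\ln(\delta+1)}{\delta+1}$, which lies in $[0,1]$ because $\ln(\delta+1) \le \delta+1$. Substituting this $p$ back collapses the estimate to $n\frac{\ln(\delta+1)}{\delta+1} + n\frac{1}{\delta+1} = n\frac{1+\ln(\delta+1)}{\delta+1}$. Since the expected size of $D$ is at most this quantity, there must exist at least one realization of $A$ for which $D = A \cup Y$ is a dominating set of cardinality at most $n\frac{1+\ln(\delta+1)}{\delta+1}$, which is exactly the asserted bound.

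I expect the main conceptual obstacle to be the second-stage patching idea rather than any single calculation. It is not obvious in advance that adjoining all the undominated vertices $Y$ to $A$ keeps the total expected size controlled; the argument succeeds only because the probability that a fixed vertex remains undominated decays like $(1-p)^{\delta}$, and it is precisely this exponential decay that lets the balanced choice $p \approx \frac{\ln \delta}{\delta}$ make the two contributions $\mathbb{E}[|A|]$ and $\mathbb{E}[|Y|]$ of comparable, small order. The routine parts—verifying $D$ dominates and carrying out the differentiation—are straightforward once this two-stage framework is in place.
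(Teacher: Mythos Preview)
Your proof is correct and is precisely the classical probabilistic argument from Alon and Spencer's \emph{The Probabilistic Method}, which is the reference the paper cites for this theorem. Note that the paper itself does not supply a proof of this statement at all---it is quoted in the introduction as a known result and attributed to \cite{Alon}---so there is no in-paper proof to compare against; your write-up simply reproduces the standard proof from the cited source.
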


In 2009 Henning found a similar result for total domination and he showed that his result was optimal for sufficiently large $\delta$ \cite{Henning}.

\begin{theo}{\rm \cite[p.4]{Henning}}\label{MHen}
Let $G = (V, E)$ be a graph of order n, with  $\delta(G) > 1$. Then $G$ has a total dominating set of size at most $n(\frac{1+\ln \delta(G)}{\delta(G)})$ vertices.
\end{theo}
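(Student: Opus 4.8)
The plan is to use the probabilistic method with a two-stage construction, which is the standard route to bounds of the shape $n(1+\ln\delta)/\delta$. Write $\delta = \delta(G)$ and fix a parameter $p \in [0,1]$ to be chosen at the end. First I would form a random set $A \subseteq V$ by placing each vertex into $A$ independently with probability $p$. This $A$ need not be a total dominating set, so the second stage repairs it: let $B = \{v \in V : N(v) \cap A = \emptyset\}$ be the set of vertices having no neighbour in $A$, and for each $v \in B$ choose one neighbour $u_v$ (possible since $\delta > 1 \ge 1$), collecting these into $C = \{u_v : v \in B\}$. I claim $S = A \cup C$ is a total dominating set: every vertex $w \notin B$ has, by definition of $B$, a neighbour in $A \subseteq S$, while every $w \in B$ has its chosen neighbour $u_w \in C \subseteq S$; in either case $w$ has a neighbour in $S$.

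Next I would estimate $\mathbb{E}[|S|]$. Since $|C| \le |B|$ by construction, we have $|S| \le |A| + |B|$, and linearity of expectation gives $\mathbb{E}[|S|] \le \mathbb{E}[|A|] + \mathbb{E}[|B|]$. Here $\mathbb{E}[|A|] = np$, while
\[
\mathbb{E}[|B|] = \sum_{v \in V} \Pr[N(v)\cap A = \emptyset] = \sum_{v\in V}(1-p)^{\deg(v)} \le n(1-p)^{\delta} \le n e^{-\delta p},
\]
where the first inequality uses $\deg(v) \ge \delta$ together with $0 \le 1-p \le 1$, and the second uses $1-p \le e^{-p}$. Hence $\mathbb{E}[|S|] \le np + n e^{-\delta p}$.

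Finally I would optimize over $p$. Minimizing $g(p) = np + n e^{-\delta p}$ via $g'(p) = n - n\delta e^{-\delta p} = 0$ yields $e^{-\delta p} = 1/\delta$, i.e. $p = (\ln\delta)/\delta$, which lies in $[0,1]$ because $0 < \ln\delta \le \delta$ for $\delta > 1$. Substituting back gives $g(p) = n\tfrac{\ln\delta}{\delta} + n\tfrac{1}{\delta} = n\tfrac{1+\ln\delta}{\delta}$. Since the expected size of $S$ is at most this value, some outcome produces a total dominating set of size at most $n(1+\ln\delta)/\delta$, which proves the theorem.

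The one step that requires care is the construction itself: a plain random set dominates most vertices but fails \emph{total} domination precisely at the vertices that happen to have no chosen neighbour, so the repair set $C$ is essential, and one must verify both that $A\cup C$ is genuinely total dominating and that $|C|$ can be charged against $|B|$ without double counting. Bounding $|C| \le |B|$ resolves the latter and keeps the expectation clean. Everything afterwards—replacing $(1-p)^{\deg v}$ by $e^{-\delta p}$ and the single-variable optimization—is routine, so I expect the conceptual crux to be the two-stage random-plus-repair idea rather than any calculation.
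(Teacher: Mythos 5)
You should first note that the paper never proves this statement: Theorem \ref{MHen} is quoted as background, with a citation to Henning's survey, and the paper's own machinery (the greedy algorithm and the analysis via good/nice functions) is aimed at a different, bipartite-specific bound that improves on it. So the only question is whether your argument stands on its own, and it does. The two-stage construction is sound: $S=A\cup C$ is a total dominating set because every vertex outside $B$ has a neighbour in $A$ and every vertex of $B$ has its designated neighbour in $C$; this correctly handles the \emph{total} requirement, since even vertices of $S$ are supplied with neighbours in $S$ --- precisely the point where the classical domination repair (adding the undominated vertices themselves) would fail, and your substitution of ``a neighbour of each uncovered vertex'' for ``the uncovered vertex itself'' is the right fix. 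The expectation bound $\mathbb{E}[|S|]\le np+n(1-p)^{\delta}\le np+ne^{-p\delta}$ is computed correctly (independence gives $\Pr[N(v)\cap A=\emptyset]=(1-p)^{\deg v}$, and $\deg v\ge\delta$ with $0\le 1-p\le 1$ justifies the monotone step), the choice $p=\ln\delta/\delta\in[0,1]$ is legitimate since $\delta\ge 2$, and the first-moment principle finishes the proof. Your argument is in fact the standard one for this theorem in the literature: it is the total-domination analogue of Alon's probabilistic bound for domination (reference \cite{Alon} of the paper), and it is essentially how the result recorded in Henning's survey is obtained. The methodological contrast with the paper is worth noting: the bound you proved probabilistically is exactly the benchmark the paper sets out to beat for bipartite graphs, and the paper's improvement is obtained deterministically, by a greedy algorithm whose step-by-step analysis (Lemmas \ref{monoton}--\ref{fgeg}) plays the role that the expectation computation and optimization over $p$ play in your argument; the probabilistic route is shorter and works for all graphs, while the greedy analysis exploits the bipartite structure to get the sharper constant.
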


Computing the exact value of total domination number of bipartite graphs is NP-complete \cite{Pfaff}.
 In this paper we provide a new upper bound which improves Theorem \ref{MHen} for bipartite graphs. First we obtain an upper bound for the total domination number of regular bipartite graphs and then we prove that our upper bound holds for all bipartite graphs. Given a $k$-regular bipartite graph $G$ of order $n$, with vertex set $V$ and edge set $E$, we would like to choose the minimum possible number of vertices in $V$, so that each vertex has at least one chosen neighbor. Suppose that $G$ has two parts $X$ and $Y$. We find an upper bound for the number of chosen vertices from one part, and due to symmetry we obtain an upper bound for the number of chosen vertices from $G$.
So we want to choose the minimum number of vertices from $X$ so that each vertex in $Y$ has a selected neighbor in $X$. 
To choose the vertices from $X$ so that all vertices from $Y$ have selected neighbors, we use a greedy algorithm as follows.

While there exists an uncovered vertex in $Y$(If a vertex $u \in Y$ has a chosen neighbor in $X$, we say $u$ is $covered$), we choose a vertex of $X$ which has the maximum number of uncovered vertices of $Y$, adjacent to itself. Eventually after calculations we obtain the following inequality: $$\gamma_{t}(G) \le n(1- \frac{k!}{\prod_{i=0}^{k-1}(\frac{k}{k-1}+i)}),$$ which is an improvement of Theorem \ref{MHen}.

\section{Definitions and Results}
\subsection{Definitions}
To achieve the aforementioned upper bound, we need to consider the problem in mathematical terms. Therefore we need to  define the following functions which will be used later in this paper.

We call $f: \mathbb{Z} \times \mathbb{Z} \rightarrow \mathbb{Z}^{\ge \,0}$ a {\it good function of order $k$}, if and only if for given $M_{0}, N_{0}$ we have $ f(M_{i}, N_{i}) = f(M_{i+1}, N_{i+1}) + 1 $, and $ M_{i+1} = M_{i} - k \lceil\frac{M_{i}}{N_{i}}\rceil $, $ N_{i+1} = N_{i} - 1 $ in which $1 < k \le N_{0}$ is a constant integer, $M_{0}$ and $N_{0}$ are natural numbers, and $k|M_{0}$. Also if $x \le 0$ or $y \le 0$, we define $f(x, y) = 0$.

We call $f:\mathbb{Z}\times \mathbb{Z} \rightarrow \mathbb{Z}^{\ge \,0} $ a {\it nice function of order $k$}, if and only if for given $M_{0}, N_{0}$ we have $ f(M_{i}, N_{i}) = f(M_{i+1}, N_{i+1}) + 1$ and $ M_{i+1} = M_{i} - k (\lceil\frac{M_{i}}{N_{i}}\rceil + x_{i}) $, $ N_{i+1} = N_{i} - 1 $ in which $1 < k \le N_{0}$ is a constant integer, $M_{0}$ and $N_{0}$ are natural numbers, all of $x_{i}$ are non-negative constant integers, and $k|M_{0}$. Also if $x \le 0$ or $y \le 0$ we define $f(x, y) = 0$. Now, we have following fact:

$Fact\, 1.$ For any good function of order $k$ like $f$, it is obvious that for any natural number $i$, $M_{i}$ is divisible by $k$.

\subsection{Our Results}
For any bipartite graph $G=(V, E)$, we will use Algorithm \ref{alg1} to obtain a total dominating set $S \in V$.

\begin{algorithm}                      
\caption{Finding a Total Domination for a Bipartite Graph}          
\label{alg1}                           
\begin{algorithmic}[1]                    
\STATE \textit{Input}:
     \STATE \hspace{\algorithmicindent}  $G=(V, E)$: a bipartite graph of order $n$ and $\delta(G) \ge k$ with parts $X$ and $Y$

\STATE \textit{Output}:
     \STATE \hspace{\algorithmicindent} \textit{$S \subseteq V$}: a total dominating set for graph $G$
     \STATE $G'=G$
     \STATE $U = \emptyset$
     \STATE $U'=\emptyset$
     \WHILE {\textit{$N(U)\ne Y$}}
        \STATE choose a vertex $u \in X$ from $G$ with maximum degree
        \STATE add $u$ to $U$
        \STATE remove all edges incident with $N(u)$ from $G$ 
     \ENDWHILE
      
     \WHILE {\textit{$N(U')\ne X$}}
        \STATE choose a vertex $u \in Y$ from $G'$ with maximum degree
        \STATE add $u$ to $U'$
        \STATE remove all edges incident with $N(u)$ from $G'$ 
     \ENDWHILE
     	
     \STATE $S=U\cup U'$
\end{algorithmic}
\end{algorithm}

For any bipartite graph $G$ of order $n$ and $\delta(G)\le k$, if Algorithm \ref{alg1} yields the set $S$, then we have:
$$\gamma_{t}(G) \le |S| \le n(1- \frac{k!}{\prod_{i=0}^{k-1}(\frac{k}{k-1}+i)}).$$

In the next section, we will prove this result.

\section{Analysis of Algorithm \ref{alg1}}

We defined good function and nice function in the previous section. First we will show some of their useful properties and then we will use them in proving our theorems.

\newtheorem{lemm}{Lemma} 

\begin{lemm}\label{monoton}
Let $n,x,y$ be non-negative integers. Then for every good function $f$ of order $k$, if $x \ge y$, then $f(x, n)\ge f(y, n)$.\end{lemm}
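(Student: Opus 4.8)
The plan is to induct on the second coordinate $n$, reading a good function of order $k$ through its defining recursion: whenever $x,n>0$ we have $f(x,n)=f(x-k\lceil x/n\rceil,\ n-1)+1$, while $f(x,n)=0$ as soon as $x\le 0$ or $n\le 0$. The base case $n=0$ is immediate, since then $f(x,0)=f(y,0)=0$; likewise, the subcase $y\le 0$ is trivial for any $n$, because $f$ takes non-negative values and $f(y,n)=0\le f(x,n)$. Hence the only real content is the inductive step with $x\ge y>0$ and $n\ge 1$.

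In that step I would apply the recursion once to each argument, setting $x'=x-k\lceil x/n\rceil$ and $y'=y-k\lceil y/n\rceil$, so that $f(x,n)=f(x',n-1)+1$ and $f(y,n)=f(y',n-1)+1$. The goal collapses to $f(x',n-1)\ge f(y',n-1)$, and I would split on the sign of $y'$. If $y'\le 0$, then $f(y',n-1)=0\le f(x',n-1)$ and we are done. Otherwise $y'>0$, and I will want $x'\ge y'\ge 0$ so that the induction hypothesis at level $n-1$ closes the argument. Here Fact~1 is essential: every first coordinate produced by the recursion is a multiple of $k$, so $x',y'$ are again multiples of $k$ and the hypothesis applies to them.

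The crux is the one-step monotonicity $x'\ge y'$ in the case $y'>0$. First observe that this case forces $n\ge k$: if instead $n<k$, then $k\lceil y/n\rceil\ge (k/n)\,y>y$ for any $y>0$, giving $y'<0$, contrary to assumption. With $n\ge k$ secured, write $x=ka$ and $y=kb$ with $a\ge b\ge 1$; the inequality $x'\ge y'$ is then equivalent to $a-b\ge \lceil ka/n\rceil-\lceil kb/n\rceil$. Using subadditivity of the ceiling, $\lceil (kb+k(a-b))/n\rceil\le \lceil kb/n\rceil+\lceil k(a-b)/n\rceil$, together with $\lceil k(a-b)/n\rceil\le a-b$, which holds exactly because $k\le n$, yields the claim; the induction hypothesis then finishes the step.

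The main obstacle, and the reason the statement is more delicate than it looks, is that the one-step map $x\mapsto x-k\lceil x/n\rceil$ is not monotone on all integers: each time $x$ crosses a multiple of $n$ the ceiling jumps by one and the map drops by $k-1$, so larger inputs can produce smaller outputs (and in fact a smaller value of $f$) for arguments that are not multiples of $k$. Restricting to multiples of $k$, as Fact~1 guarantees along the dynamics, is precisely what neutralizes these jumps, since increasing a multiple of $k$ by $k\le n$ raises $\lceil\,\cdot/n\rceil$ by at most one. I expect checking this interaction cleanly, and confirming that divisibility by $k$ is preserved at every level of the induction, to be the only subtle point; the boundary cases are routine bookkeeping.
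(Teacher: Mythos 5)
Your proof is correct, and its combinatorial heart coincides with the paper's: both reduce the claim, through one application of the defining recursion, to the one-step monotonicity $x - k\lceil\frac{x}{n}\rceil \ge y - k\lceil\frac{y}{n}\rceil$ for multiples of $k$, and both derive this from subadditivity of the ceiling together with the bound $\lceil\frac{k(a-b)}{n}\rceil \le a-b$, valid because $k \le n$. The differences lie in the scaffolding, and one of them matters. The paper runs a strong induction on the first argument (over multiples of $k$) and proves the key inequality by contradiction; you induct on the second argument $n$ and argue directly, splitting on the sign of the one-step image $y' = y - k\lceil\frac{y}{n}\rceil$. That split is where your version is genuinely stronger: when $n < k$ and $y > 0$ you observe that $y' < 0$, so $f(y,n) = 1 \le f(x,n)$ and monotonicity is trivial, whereas the paper's argument ends with ``$a < \lceil\frac{ak}{n}\rceil$, which contradicts $k \le n$'' --- but $k \le n$ is not among the lemma's hypotheses, and indeed the second argument drops below $k$ as the recursion of a good function proceeds, at which point the paper is still invoking this lemma (for instance inside Lemmas \ref{mainlemma} and \ref{fgeg}). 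So your case analysis closes a small but genuine gap in the published proof rather than merely restyling it. Your side remark that the map $x \mapsto x - k\lceil\frac{x}{n}\rceil$ is not monotone off multiples of $k$ is also accurate and explains why Fact 1 (divisibility by $k$ propagating along the recursion) is indispensable in both arguments.
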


\theoremstyle{proof}
\begin{proof}We prove this lemma by induction on $x$. By definition of good function of order $k$, we know that both $x$ and $y$ must be multiples of $k$.  For any $n$, it is clear that $f(0, n) \ge f(0, n)$. Now, suppose that for any $n, x \ge y$, if $x < C$($C$ is a multiple of $k$), then $f(x, n) \ge f(y, n)$. Then we have to show that for any $n$, $x \ge y$, if $x = C$, then $f(x, n) \ge f(y, n)$. We have: 	$$f(x, n)=f(x-\lceil\frac{x}{n}\rceil k, n-1)+1,$$ $$f(y, n)=f(y-\lceil\frac{y}{n}\rceil k, n-1)+1.$$ To complete the proof we should show that the following inequality cannot hold:  $$x-\lceil\frac{x}{n}\rceil k < y-\lceil\frac{y}{n}\rceil k.$$  After simplifying we have: 	$$\frac{x}{k}-\lceil\frac{x}{n}\rceil < \frac{y}{k}-\lceil\frac{y}{n}\rceil.$$ Since both $x$ and $y$ are multiple of $k$ and $x \ge y$, there exists some non-negative integer $a$ such that $x=y+a k$. So we have: 	$$\frac{(y+a k)}{k}-\lceil\frac{y+a k}{n}\rceil<\frac{y}{k}-\lceil\frac{y}{n}\rceil.$$ So, we have:	$$a<\lceil\frac{y+a k}{n}\rceil-\lceil\frac{y}{n}\rceil. \qquad (1.1)$$ Also it is obvious that: $$\lceil\frac{y+a k}{n}\rceil \le \lceil \frac{y}{n} \rceil + \lceil\frac{a k}{n}\rceil. \qquad (1.2)$$
(1.1) and (1.2) imply that $a<\lceil\frac{a k}{n}\rceil$, which contradicts $k \le n$. 
\end{proof}

\begin{lemm} \label{decrease}	
For every good function $f$ of order $k$, and each $i > 0$ the following holds:

$$M_{i-1}-M_{i} \ge M_{i}-M_{i+1}.$$ \end{lemm}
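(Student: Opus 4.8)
The plan is to translate the claimed inequality into a statement purely about the ceiling terms that define the recursion, and then to exploit the hypothesis $k \ge 2$ to compensate for the drop in the denominator. By the definition of a good function of order $k$, we have $M_{i-1} - M_i = k\lceil M_{i-1}/N_{i-1}\rceil$ and $M_i - M_{i+1} = k\lceil M_i/N_i \rceil$. Dividing by the positive constant $k$, the target inequality $M_{i-1} - M_i \ge M_i - M_{i+1}$ is therefore equivalent to
$$\left\lceil \frac{M_{i-1}}{N_{i-1}} \right\rceil \ge \left\lceil \frac{M_i}{N_i} \right\rceil.$$
First I would reduce the whole problem to proving this single monotonicity statement for the sequence of per-step decrements.

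Next, writing $q = \lceil M_{i-1}/N_{i-1}\rceil$, I would substitute $M_i = M_{i-1} - kq$ and $N_i = N_{i-1} - 1$. Since $q$ is an integer, the inequality $\lceil M_i/N_i \rceil \le q$ holds if and only if $M_i \le q N_i$ (using $N_i > 0$). Now the definition of the ceiling gives $M_{i-1} \le q N_{i-1}$, hence
$$M_i = M_{i-1} - kq \le q N_{i-1} - kq = q(N_{i-1} - k) \le q(N_{i-1}-1) = q N_i,$$
where the last inequality uses $k \ge 2$ together with $q \ge 0$. This is exactly what is needed, and it makes transparent why the condition $k > 1$ is essential: the $k-1$ extra units of decrease in $M$ dominate the single-unit decrease in $N$.

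Finally I would attend to the boundary behaviour dictated by the convention $f(x,y)=0$ for $x \le 0$ or $y \le 0$. Once some $M_j = 0$ (a multiple of $k$ by Fact $1$), every later $M$ also vanishes because $\lceil 0/N \rceil = 0$, so all subsequent decrements are $0$ and the inequality reads $(\ge 0) \ge 0$, which holds trivially; an analogous remark covers indices at which $N_i$ ceases to be positive. The only genuine content is thus the computation above in the regime $M_{i-1}, M_i > 0$ and $N_{i-1}, N_i > 0$. I expect the main, though mild, obstacle to be the careful bookkeeping of these degenerate cases and the justification that the passage between the ceiling inequality and $M_i \le q N_i$ is legitimate precisely because the relevant denominators remain positive.
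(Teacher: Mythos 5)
Your proof is correct and is essentially the paper's argument run forwards instead of by contradiction: both reduce the claim to comparing the ceilings $\lceil M_{i-1}/N_{i-1}\rceil$ and $\lceil M_i/N_i\rceil$, and both rest on the bound $M_{i-1} \le N_{i-1}\lceil M_{i-1}/N_{i-1}\rceil$ together with $N_i = N_{i-1}-1$ and $M_i = M_{i-1}-k\lceil M_{i-1}/N_{i-1}\rceil$. One small quibble with your commentary only: your final inequality $q(N_{i-1}-k)\le q(N_{i-1}-1)$ needs just $k\ge 1$, so the hypothesis $k>1$ is not actually what this step hinges on, though the proof is of course valid under the given assumption $k\ge 2$.
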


\begin{proof}By contradiction assume that there exists some $i$ , $M_{i-1}-M_{i} < M_{i} - M_{i+1}$. Then we have:
$$ M_{i-1} - M_{i} < (M_{i-1} - k\lceil\frac{M_{i-1}}{N_{i-1}}\rceil) - (M_{i} - k\lceil\frac{M_{i}}{N_{i}}\rceil).$$
So,
$$\frac{M_{i-1}}{N_{i-1}} < \frac{M_{i}}{N_{i}}.$$
Therefore,
$$M_{i-1}N_{i} < M_{i}N_{i-1}.$$
This implies that, 
$$M_{i-1}N_{i-1} - M_{i-1} < N_{i-1}M_{i-1} - kN_{i-1}\lceil\frac{M_{i-1}}{N_{i-1}}\rceil.$$
So,
$$kN_{i-1}\lceil\frac{M_{i-1}}{N_{i-1}}\rceil < M_{i-1},$$
which is obviously a contradiction because $k > 1.$
\end{proof} 

\begin{lemm}\label{mainlemma}
Suppose that f is a good function of order $k$. Then for any natural number $n \ge k$, the following holds:
$$\frac{f(kn,\, n)}{n} \le 1- \frac{k!}{\prod_{i=0}^{k-1}(\frac{k}{k-1}+i)}.$$
\end{lemm}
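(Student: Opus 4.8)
The plan is to interpret $f(kn,n)$ concretely as the number of iterations of the defining recursion started at $(M_0,N_0)=(kn,n)$ before the first index $i$ with $M_i\le 0$. Writing $M_i=km_i$ (valid by Fact 1) and noting that $N_i=n-i$, the run terminates at step $f=n-N_{\mathrm{fin}}$, where $N_{\mathrm{fin}}$ is the value of $N$ at the moment $M$ first drops to $0$ or below. Thus it suffices to bound $N_{\mathrm{fin}}$ from below and then to verify the algebraic identity
\[
\prod_{d=1}^{k}\frac{d(k-1)}{d(k-1)+1}=\frac{k!}{\prod_{i=0}^{k-1}\bigl(\frac{k}{k-1}+i\bigr)},
\]
which holds because the factor $d(k-1)+1$ (for $d=1,\dots,k$) equals $k+i(k-1)=(k-1)\bigl(\frac{k}{k-1}+i\bigr)$ for $i=d-1$, while the numerators multiply to $\prod_{d=1}^{k}d(k-1)=(k-1)^{k}\,k!$, so the powers $(k-1)^{k}$ cancel.

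The core of the argument is to split the run into phases indexed by the integer multiplier $d_i=\lceil M_i/N_i\rceil$. Since $M_i-M_{i+1}=k\,d_i$ and Lemma \ref{decrease} says these increments are non-increasing, $d_i$ decreases monotonically from $d_0=k$ down toward $1$, so the steps partition into phases $d=k,k-1,\dots,1$ on which $d_i\equiv d$, i.e. $(d-1)N_i<M_i\le dN_i$. On phase $d$ I track the gap $h_i=M_i-(d-1)N_i$ to the lower threshold; since $M$ drops by $kd$ and $N$ by $1$ at each step of the phase,
\[
h_{i+1}=h_i-\bigl(d(k-1)+1\bigr),
\]
so $h$ decreases by the fixed amount $d(k-1)+1$ per step. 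As $h\le N$ at the start of the phase (there $M\le dN$), the phase lasts at most $\nu_d/(d(k-1)+1)$ steps, where $\nu_d$ is the value of $N$ entering phase $d$; consequently the value $\nu_{d-1}$ entering the next phase obeys $\nu_{d-1}\ge\frac{d(k-1)}{d(k-1)+1}\,\nu_d$.

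Because $M_0=kN_0$ forces $d_0=k$ and $h_0=N_0$, we have $\nu_k=n$, and telescoping the inequality $\nu_{d-1}\ge\frac{d(k-1)}{d(k-1)+1}\nu_d$ over $d=k,k-1,\dots,1$ gives $N_{\mathrm{fin}}=\nu_0\ge n\prod_{d=1}^{k}\frac{d(k-1)}{d(k-1)+1}$. Combined with $f=n-N_{\mathrm{fin}}$ and the identity of the first paragraph, this is precisely $\frac{f(kn,n)}{n}\le 1-\frac{k!}{\prod_{i=0}^{k-1}(\frac{k}{k-1}+i)}$. The same lower bound shows $N_{\mathrm{fin}}>0$, confirming that $M$ reaches $0$ before $N$ does and so the phase picture is consistent.

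I expect the main obstacle to be the passage from the phase-length bound $L_d\le \nu_d/(d(k-1)+1)$, in which $L_d$ is an integer but the right-hand side is generally not, to the clean per-phase inequality for $\nu_{d-1}$ without letting a unit rounding error leak into each of the $k$ phases. The inequality is tight — for $k=2$ and $n\in\{3,6,9,\dots\}$ one checks $f/n=2/3$ exactly — so there is no slack to absorb such errors, and the crude estimate $h\le N$ at a phase start must be replaced by the exact residual value of $h$ left by the previous threshold crossing. I would control this using Fact 1 (each $M_i$ is a multiple of $k$) together with the concavity of the sequence $M_i$ from Lemma \ref{decrease} to locate the crossings exactly; alternatively, I would dominate the true trajectory by an idealized good (or nice) function whose phase lengths telescope without rounding loss, and transfer the resulting bound back via the monotonicity in Lemma \ref{monoton}.
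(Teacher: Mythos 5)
Your phase decomposition is not a different route: the phases indexed by $d=\lceil M_i/N_i\rceil$ are exactly the paper's sets $T_a$ with $t_a=|T_a|$ (consecutive by Lemma \ref{decrease}), your per-step gap decrement $d(k-1)+1$ is the paper's $ak-a+1$, and your telescoping product together with the closing algebraic identity is the paper's computation $1-g(k,1)=\prod_{a=1}^{k}\bigl(1-\frac{1}{ak-a+1}\bigr)$. The difference is that the obstacle you flag in your last paragraph is not a side issue to be patched later; it is precisely where all of the paper's actual work lies.

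The gap is genuine because the clean per-phase inequality $\nu_{d-1}\ge\frac{d(k-1)}{d(k-1)+1}\nu_d$, which is the load-bearing step of your argument, is false for the true trajectory. Take $k=2$, $n=4$, so $(M_0,N_0)=(8,4)$: the run is $(8,4)\to(4,3)\to(0,2)$, and both steps lie in phase $d=2$ since $\lceil 8/4\rceil=\lceil 4/3\rceil=2$; hence $L_2=\lceil 4/3\rceil=2$ and $\nu_1=2<\frac{2}{3}\cdot 4=\frac{8}{3}$. The lemma's conclusion still holds there ($f=2\le\frac{8}{3}$) only because the rounding loss in phase $2$ is repaid by phase $1$ being empty; this shows the bound cannot be proved phase-by-phase, since losses in one phase must be traded globally against gains in later ones, and the bound is tight so no additive error term is tolerable. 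Your proposed remedy---dominate the true run by an idealized process with fractional phase lengths and transfer back via Lemma \ref{monoton}---is exactly what the paper does, and it is not a routine patch: it requires introducing the recursions $g'$ and $g(a,b)=g\bigl(a-1,\,b-\frac{b}{ak-a+1}\bigr)+\frac{b}{ak-a+1}$, establishing $g(a,b)=b\,g(a,1)$, $g(a,b)\le b$, and $g'(a,b)\le g(a,b)$ (the paper's Claims 1--4), plus a monotonicity argument that shortening a phase cannot decrease the total count (itself delicate, since a shorter phase changes both $M$ and $N$ at the next phase boundary, while Lemma \ref{monoton} gives monotonicity in $M$ only). Since your write-up asserts the false local inequality as the core and leaves this machinery as a sketch, the proof is incomplete at its decisive step.
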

\begin{proof}

In this case $M_{0} = kn$ and $N_{0} = n$. Assume that for any positive integer $a$, $T_{a}=\{x > 0$ $|$ $M_{x-1} - M_{x} = ak\}$ (it is clear from Fact 1 that $M_{x-1}-M_{x}$ is a multiple of $k$ and from Lemma \ref{decrease} for every $x \ge 0$,  $M_{x-1} - M_{x} \le M_{0} - M_{1}$). We define $t_{a}=\vert T_{a} \vert$. It follows immediately that, $$\frac{f(kn, \,n)}{n} = \frac{\sum_{i=1}^{k} t_{i}}{n}.$$

 According to Lemma \ref {decrease}, for any $a$, the elements of $T_{a}$ are $t_{a}$ consecutive integers. Now, consider the smallest $i$ for which $M_{i-1}-M_{i}=ak$. We have $M_{i - 1} - M_{i} = \cdots = M_{i-2+t_{a}} - M_{i-1+t_{a}} = ak$ and $M_{i-1+t_{a}} - M_{i+t_{a}} < ak$.
By the definition $\lceil\frac{M_{i-1+t_{a}}}{N_{i-1+t_{a}}}\rceil \le a - 1$.
 
Suppose that $N_{i-1} = bn$ (where $0 \le b \le 1$). By the definition $M_{i-1} - M_{i} = k\lceil\frac{M_{i-1}}{N_{i-1}}\rceil$, and so $a = \lceil\frac{M_{i-1}}{N_{i-1}}\rceil = \lceil\frac{M_{i-1}}{bn}\rceil$. Therefore we have $M_{i} \le abn$. According to Lemma \ref{monoton} the upper bound of  $f(M_{i-1}, N_{i-1})$ is achieved when $M_{i-1}$ is the biggest number and we have $M_{i-1} \le  abn$ for which $M_{i-1}$ is multiple of $k$. Now, suppose that $M_{i-1} = abn - \epsilon$. We have:

 $$\frac{M_{i-1} -t_{a} ak}{N_{i-1} -t_{a}} = \frac{M_{i-1+t_{a}}}{N_{i-1+t_{a}}}  \le a-1.$$

So for the upper bound of $f(M_{i-1}, N_{i-1})$ we have: 

$$ \frac{abn - \epsilon - t_{a} ak}{bn - t_{a}} \le a-1. \quad (3.1)$$

It follows from (3.1) that $\frac{t_{a}}{n} \ge \frac {b - \epsilon n}{ak \,-\, a\, - \,1} $. By Lemma \ref{monoton}, if $t_{a}$ decreases, then $f(abn - \epsilon, N_{i-1})$ becomes larger. So if we suppose that for each $a$, $t_{a}$ has the smallest possible value it can have, according to Lemma 1, $f(kn, \,n)$ becomes as large as possible. To find an upper bound for $f(kn, \,n)$ we suppose $\frac{t_{a}}{n}=\frac {b - \epsilon n}{ak - a - 1}$.

Since $\frac{f(kn, \,n)}{n} = \frac{\sum_{i=1}^{k} t_{i}}{n}$, to find an upper bound for $\frac{f(kn,\,n)}{n}$, we can calculate $\frac{\sum_{i=1}^{k}{t_{i}}}{n}$. For this purpose we define a new recursive function:

$$  g'(a, b) = g'(a-1, \,b -\frac{t_{a}}{n}) + \frac{t_{a}}{n}.\quad (3.2)$$ In this function $a$ should be a non-negative integer and for every $x$, define $g'(0, x)=0.$ 

Now, we rewrite the function (3.2) as follows:

$$ g'(a, b) = g'(a-1,\, b - \frac{b - \epsilon n}{ak-a+1}) + \frac {b - \epsilon n}{ak-a+1}.\quad (3.3)$$
We define the recursive function $g$ as follows:
$$ g(a, b) = g(a-1,\, b - \frac{b}{ak-a+1}) + \frac {b}{ak-a+1}.\quad (3.4)$$
 In this function $a$ should be a non-negative integer, $k > 1$ is a constant natural number and for every $x$, we define $g(0, x)=0.$ 

\begin{claime} 
In function (3.4), we have $g(a,\, b) = bg(a,\, 1).$
\end{claime}

\begin{proof}
We apply induction on $a$. It is clear that $g(1,\, b) = bg(1,\, 1)$ by the definition of function $g$.
Suppose that for each $ a < r $, $g(a,\, b) = bg(a,\, 1)$. Now, for $a = r$ we have:
$$g(r, b) = g(r - 1,\, b - \frac{b}{rk - r + 1} ) + \frac{b}{rk - r + 1},$$ 
$$g(r, 1) = g(r - 1,\, 1  - \frac{1}{rk - r + 1} ) + \frac{1}{rk - r + 1}.$$ 
By induction we find that $g(r-1, \,b - \frac{b}{rk - r +1} ) = bg(r - 1,\, 1 - \frac{1}{rk - r +1})$. Hence $g(r,\, b) = bg(r,\, 1)$ and therefore the proof is complete.
\end{proof}

\theoremstyle{remark}
\newtheorem{claimf}{Claim}
\begin{claime} 
In function (3.4), for any non-negative number $c \le b$, we have $g(a, b) = g(a, b - c) + g(a, c)$.
\end{claime}

\begin{proof}
According to Claim 1, we have $g(a, b-c) + g(a,c) = (b-c)g(a,1) + cg(a,1)$ which is equal to $bg(a,1)$ and is obviously equal to $g(a,b)$.
\end{proof}

\theoremstyle{remark}
\newtheorem{claimg}{Claim}
\begin{claime} 
In function (3.4), we have $g(a, b) \le b$.
\end{claime}

\begin{proof} 
According to Claim 1, we have $g(a, b) = bg(a,1)$ and by definition of $g$, $g(a, 1) < 1$.
\end{proof}

\theoremstyle{remark}
\newtheorem{claimh}{Claim}
\begin{claime} 
In functions (3.3) and (3.4), we have $g'(a, b) \le g(a,b)$.
\end{claime}

\begin{proof}
We apply induction on $a$. It is clear that $g'(1, b) \le g(1,b)$. Suppose for each $a < r$ we have $g'(a, b) \le g(a,b)$. Now, for $a = r$ using Claims 2 and 3 we have: 
$$g(a, b) = g(a-1,\, b - \frac{b}{ak-a+1}) + \frac {b}{ak-a+1}$$ 
$$\ge g(a-1,\, b - \frac{b}{ak-a+1}) + \frac {b}{ak-a+1} + g(a-1,\, \frac{\epsilon n}{ak-a+1}) - \frac{\epsilon n}{ak-a+1}$$ 
$$= g(a-1,\, b - \frac{b - \epsilon n}{ak-a+1}) + \frac{b - \epsilon n}{ak-a+1}$$ 
$$\ge g'(a-1,\, b - \frac{b - \epsilon n}{ak-a+1}) + \frac{b - \epsilon n}{ak-a+1} = g'(a, b).$$
\end{proof}

Clearly, $g(k, \,1)$ is an upper bound for $\frac{f(kn, \,n)}{n}.$ 

We have,
$$g(k, \,1) = g(k-1,\, 1- \frac{1}{k^{2} -k+1}) +\frac{1}{k^{2} -k+1}.$$ 

So Claim 1 yields that,

$$1-g(k,\, 1) = 1 - \frac{1}{k^{2}-k+1} -g(k-1, \,1)+\frac{g(k-1, \,1)}{k^{2}-k+1}.$$

Therefore we have,

$$1-g(k, \,1) = (1 - \frac{1}{k^{2}-k+1})(1-g(k-1, \,1)).$$

Now, recursively we can write,

$$1-g(k, \,1) = (1 - \frac{1}{k^{2}-k+1})(1-\frac{1}{(k-1)k-(k-1)+1})\cdots(1-\frac{1}{k-1+1}).$$

Now, if we declare $d_{a}=1-\frac{1}{ak-a+1}$ we have,

$$g(k, \,1) = 1-d_{k} d_{k-1} d_{k-2} \cdots d_{1}.$$

So we obtain: 

$$g(k,\, 1) = 1- \frac{k!}{\prod_{i=1}^{n}(\frac{k}{k-1}+i)}.$$

\end{proof}

\begin{lemm}\label{fgeg}
For any admissible constant numbers $x_{i}, k, n, m$ the following inequlity holds:

$$f(m, n) \ge g(m, n),$$

where $f$ is a good function and $g$ is a nice function both of order $k$.
\end{lemm}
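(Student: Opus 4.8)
The obstacle in attacking $f(m,n)\ge g(m,n)$ head-on is that after a single unfolding of the two recursions the first arguments of $f$ and $g$ no longer agree: the nice function subtracts an extra $kx_i\ge 0$ at each step, so a direct induction does not close. The plan is therefore to prove the stronger statement that for all integers $a\ge b$ (both multiples of $k$, as Fact~1 guarantees along any admissible trajectory) and all $n$, one has $f(a,n)\ge g(b,n)$; specializing to $a=b=m$ then yields the lemma.

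I would induct on $n$. The base case and all degenerate cases are immediate: whenever $n\le 0$ or $b\le 0$ we have $g(b,n)=0$, and since $f$ takes only non-negative values, $f(a,n)\ge 0=g(b,n)$. For the inductive step, assume the claim at level $n-1$ and take $a\ge b>0$, $n>0$. Unfolding one step of each recursion gives $f(a,n)=f(a',n-1)+1$ with $a'=a-k\lceil a/n\rceil$, and $g(b,n)=g(b',n-1)+1$ with $b'=b-k(\lceil b/n\rceil+x)$, where $x\ge 0$ is the (non-negative) constant $x_i$ the nice function uses at this step. Both $a'$ and $b'$ are again multiples of $k$, so the arguments stay admissible. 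It then suffices to prove $a'\ge b'$: once that holds, the induction hypothesis applied to the pair $(a',b')$ at level $n-1$ gives $f(a',n-1)\ge g(b',n-1)$, and adding $1$ completes the step.

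The crux is thus the inequality $a'\ge b'$. Since $x\ge 0$ we have $b'\le b-k\lceil b/n\rceil$, so it is enough to show $a-k\lceil a/n\rceil\ge b-k\lceil b/n\rceil$, i.e.\ that $t\mapsto t-k\lceil t/n\rceil$ is non-decreasing on the multiples of $k$. When $k\le n$ this is exactly the monotonicity established inside the proof of Lemma~\ref{monoton} (the computation there shows the reverse strict inequality is impossible precisely because $k\le n$), which I would cite. The remaining regime $k>n$ is harmless: for any $t>0$ that is a multiple of $k$ one has $k\lceil t/n\rceil\ge kt/n>t$, so both $a'$ and $b'$ are strictly negative, whence $f(a',n-1)=g(b',n-1)=0$ and the step holds trivially. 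The real work is therefore entirely in correctly strengthening the inductive hypothesis to arbitrary $a\ge b$ and in bookkeeping the boundary conventions; after that, the monotonicity of a single good-function step together with $x_i\ge 0$ does everything.
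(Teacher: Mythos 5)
Your proof is correct, but it takes a genuinely different route from the paper's. The paper performs surgery on the nice function's constants: it takes the largest index $j$ with $x_{j}>0$, observes that beyond step $j$ the nice recursion coincides with a good function, and invokes Lemma \ref{monoton} on that tail to conclude that replacing $x_{j}$ by $0$ cannot decrease $g(m,n)$; iterating backwards through the nonzero constants turns $g$ into $f$ while the value never decreases. You instead couple the two recursions step by step: you strengthen the statement to $f(a,n)\ge g(b,n)$ for all multiples of $k$ with $a\ge b$, induct on $n$, and reduce each step to the one-step monotonicity $a-k\lceil a/n\rceil\ge b-k\lceil b/n\rceil$, which is exactly the computation inside the proof of Lemma \ref{monoton}, with the regime $k>n$ handled separately since there both trajectories terminate in one step. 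Both arguments rest on the same key ingredient (monotonicity of a good-function step in its first argument), but they trade different things. Your coupling induction is more self-contained and in one respect more careful: the paper applies Lemma \ref{monoton} at level $N_{j+1}$, which can be smaller than $k$, a regime where the written proof of that lemma (whose contradiction needs $k\le n$) is silent; your explicit $k>n$ case fills exactly that hole. The paper's surgery is shorter, reusing Lemma \ref{monoton} as a black box and avoiding the strengthened two-variable statement; it also sidesteps a bookkeeping point you should make explicit, namely that your inductive hypothesis at level $n-1$ must be quantified over all admissible constant sequences, because the tail of the nice function after one step is a nice function with shifted constants $x_{i+1},x_{i+2},\ldots$.
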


\begin{proof}
If all $x_{i}=0$, then $f(m,n)=g(m,n)$ and there is nothing to prove. Suppose that for all $j \ge g(m, n)$, $x_{j} = 0$ (if they are not zero, we can change them to zero, because it will not have any effect on $g(m, n)$). Now, consider the largest $j$ for which $x_{j} > 0$. We have:
$$g(M_{j}, N_{j}) =  g(M_{j} - k (\lceil\frac{M_{j}}{N_{j}}\rceil + x_{j}), N_{j+1}) + 1 = f(M_{j} - k (\lceil\frac{M_{j}}{N_{j}}\rceil + x_{j}), N_{j+1}) + 1.$$
If we change $x_{j}$ to zero we have:
$$g(M_{j}, N_{j}) =  g(M_{j} - k (\lceil\frac{M_{j}}{N_{j}}\rceil ), N_{j+1}) + 1 =  f(M_{j} - k (\lceil\frac{M_{j}}{N_{j}}\rceil ), N_{j+1}) + 1.$$
According to Lemma \ref{monoton}, $f(M_{j} - k (\lceil\frac{M_{j}}{N_{j}}\rceil + x_{j}), N_{j+1}) \le f(M_{j} - k (\lceil\frac{M_{j}}{N_{j}}\rceil ), N_{j+1})$. Therefore by changing $x_{j}$ to zero, $g(M_{j}, N_{j})$ does not decrease. So by iteratively changing all $x_{j}$ to zero, $g(m, n)$ does not decrease and it is obvious that if all $x_{j}$ are zero, then $f(m, n) = g(m, n)$. Therefore we have $f(m, n) \ge g(m, n)$.

\end{proof}

\begin{theo}

Let $G$ be a bipartite $k$-regular graph. Then the following holds:$$\gamma_{t}(G) \le n(1- \frac{k!}{\prod_{i=0}^{k-1}(\frac{k}{k-1}+i)}).$$\label{reg_bipartite}

\end{theo}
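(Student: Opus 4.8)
The plan is to connect the greedy process of the first \texttt{WHILE} loop in Algorithm \ref{alg1} to the good function $f$ of order $k$, and then invoke Lemma \ref{mainlemma} to extract the bound. Let $G$ be a $k$-regular bipartite graph with parts $X$ and $Y$, where $|X|=|Y|=n/2$. By $k$-regularity the number of edges is $k|X| = k|Y|$, so $M_{0} := k|Y| = k\cdot(n/2)$ edges emanate from the $X$-side toward $Y$. I would track two quantities as the greedy selection from $X$ proceeds: $N_{i}$, the number of still-uncovered vertices of $Y$ after $i$ selections, and $M_{i}$, the total number of edges joining $X$ to the currently uncovered part of $Y$. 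Initially $N_{0}=n/2$ and $M_{0}=k\cdot(n/2)$, so $k \mid M_{0}$ and $M_{0}=kN_{0}$, matching exactly the initial data of the good function in Lemma \ref{mainlemma}.

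The crucial step is to verify that one greedy selection realizes (at worst) one step of the good-function recurrence. At stage $i$ there are $M_{i}$ edges landing on $N_{i}$ uncovered vertices of $Y$, so by averaging some vertex $u\in X$ has degree at least $\lceil M_{i}/N_{i}\rceil$ into the uncovered set; the greedy rule picks a vertex of maximum degree, hence covers at least $\lceil M_{i}/N_{i}\rceil$ new vertices of $Y$. Covering one vertex of $Y$ removes its $k$ incident edges (regularity), so selecting $u$ removes at least $k\lceil M_{i}/N_{i}\rceil$ edges. This gives $M_{i+1}\le M_{i}-k\lceil M_{i}/N_{i}\rceil$ and $N_{i+1}\le N_{i}-\lceil M_{i}/N_{i}\rceil$. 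To match the good-function recurrence cleanly I would instead count selections against a worst-case schedule in which each step covers the minimum guaranteed number of vertices; the extra covered vertices only help, and this slack is precisely what the nice function of order $k$ encodes. Thus the number of $X$-vertices the loop selects is bounded by $f(M_{0},N_{0})=f(k\cdot(n/2),\,n/2)$ for the associated good function, after using Lemma \ref{fgeg} to pass from the actual (nice-function) process to the dominating good function.

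Applying Lemma \ref{mainlemma} with $n$ replaced by $n/2$ then yields
$$|U| \le f\!\left(k\cdot\tfrac{n}{2},\,\tfrac{n}{2}\right) \le \tfrac{n}{2}\left(1-\frac{k!}{\prod_{i=0}^{k-1}(\frac{k}{k-1}+i)}\right).$$
By the symmetry of the bipartite $k$-regular graph under swapping $X$ and $Y$, the second \texttt{WHILE} loop selecting $U'\subseteq Y$ admits the identical bound $|U'|\le \tfrac{n}{2}\bigl(1-\frac{k!}{\prod_{i=0}^{k-1}(\frac{k}{k-1}+i)}\bigr)$. Since $S=U\cup U'$ and every vertex of $Y$ has a neighbor in $U$ while every vertex of $X$ has a neighbor in $U'$, the set $S$ is a total dominating set, giving $\gamma_{t}(G)\le |S|\le |U|+|U'|$ and hence the claimed bound on $n$.

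I expect the main obstacle to be the discrete-to-recurrence bookkeeping in the second paragraph: the greedy step may cover \emph{more} than $\lceil M_{i}/N_{i}\rceil$ vertices and the decrement in $N_{i}$ need not equal one per selection, so the raw process does not literally equal the good function. The careful part is arguing that passing to the worst-case schedule (one vertex covered per unit of the $N$-counter, with the surplus absorbed into the $x_{i}$ of a nice function) only increases the selection count, after which Lemma \ref{fgeg} and the monotonicity of Lemma \ref{monoton} let me replace the nice function by the good function without loss.
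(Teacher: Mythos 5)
Your overall architecture matches the paper's: run the greedy selection on $X$, model it by a nice function, pass to a good function via Lemma \ref{fgeg}, apply Lemma \ref{mainlemma}, and finish by symmetry. But your discrete-to-recurrence bookkeeping contains a genuine error, not just a technicality to be smoothed over. You define $N_i$ as the number of uncovered vertices of $Y$ and $M_i$ as the number of edges joining $X$ to the uncovered part of $Y$, and you claim that ``by averaging'' some vertex of $X$ has degree at least $\lceil M_i/N_i\rceil$ into the uncovered set. This averages over the wrong side. Since the algorithm deletes exactly the edges incident with covered vertices, every uncovered vertex of $Y$ keeps all $k$ of its edges, so with your definitions $M_i = kN_i$ identically and $\lceil M_i/N_i\rceil = k$ at every step; your claim thus asserts that at every stage some vertex of $X$ has $k$ uncovered neighbors, which is false. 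Concretely, take $G = C_6$ with $k = 2$, parts $X = \{x_1,x_2,x_3\}$ and $Y=\{y_1,y_2,y_3\}$: after the first greedy pick covers two vertices of $Y$, the last uncovered vertex of $Y$ has two neighbors in $X$, each with exactly one uncovered neighbor, so the maximum degree into the uncovered set is $1 < k$. A pigeonhole over $M_i$ edges and $N_i$ uncovered $Y$-vertices only bounds the degree of some uncovered $Y$-vertex (trivially $k$), not of any $X$-vertex.

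The paper avoids this by averaging over the side from which vertices are selected: the $m_i$ remaining edges are distributed over the $n_i$ non-isolated vertices of $X$, so some vertex of $X$ has degree at least $\lceil m_i/n_i \rceil$. Correspondingly, the counter that decreases by exactly one per step --- and hence plays the role of $N_i$ in the recursion $N_{i+1} = N_i - 1$ of the good and nice functions --- is not the uncovered $Y$-count but an upper bound $\frac{n}{2} - i$ on the number of non-isolated $X$-vertices: the selected vertex itself becomes isolated after its step, because all edges at its neighbors are removed. Then the maximum degree equals $\lceil M_i/N_i\rceil + x_i$ for some $x_i \ge 0$, and this surplus is exactly what the $x_i$ of the nice function absorbs, after which Lemmas \ref{fgeg} and \ref{mainlemma} (with $n$ replaced by $\frac{n}{2}$) apply as you intend. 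Your ``worst-case schedule'' patch addresses only the rate at which your $N_i$ decreases; it cannot repair the degree lower bound, which is simply invalid for your choice of $N_i$. Once you re-index with $N_i = \frac{n}{2} - i$ and keep $M_i$ as the remaining edge count, the rest of your argument --- the passage through the nice and good functions and the symmetric bound for $U'$ --- goes through and reproduces the paper's proof.
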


\begin{proof}
 First suppose that $G = (X, Y)$ and $\vert X \vert = \vert Y \vert = \frac{n}{2}$. Using a greedy algorithm we will find $S \subseteq X$ such that $N(S) = Y$, and $\vert S \vert \le \frac{n}{2} - \frac {nk!}{2(\prod_{i=0}^{k-1}(\frac{k}{k - 1} + i))}$. Following the same procedure we will find $S^{\prime} \subseteq Y$ such that $N(S^{\prime}) = X$, and $\vert S^{\prime} \vert \le \frac{n}{2} - \frac {nk!}{2(\prod_{i=0}^{k-1}(\frac{k}{k - 1} + i))}$. Then $S 	\cup S^{\prime}$ is a total dominating set for $G$, of the desired size. Now, we provide a greedy algorithm to obtain $S$. First choose an arbitrary vertex $v_{1}$ of $X$ and remove all edges incident with $N(v_{1})$. Let $v_{2} \in X$ be a vertex in the new graph whose degree is maximum. Then remove all edges incident with $N(v_{2})$. Continue this procedure until all edges of $ G $ are removed. Let $S = \{v_{1},\ldots, v_{r}\}$ ($r$ is the number of selected vertices of $X$). 

We denote the number of non-isolated vertices of $X$ in the $i$th step by $n_{i}$ and the number of remaining edges by $m_{i}$. By the pigeonhole principle, in the $i$th step, there exists $v \in X$ such that $d(v) \ge \lceil \frac{m_{i}}{n_{i}}\rceil$. So suppose that $u$ has the maximum degree among all the elements of $X$ and $d(u) = \lceil \frac{m_{i}}{n_{i}}\rceil + y_{i}$ in which $y_{i}$ is a non-negative integer. Now, we define a nice function $g$ of order $k$, with $M_{0} = \frac{kn}{2}$ and $N_{0} = \frac{n}{2}$ and for all feasible $i \ge 0$, $x_{i} = y_{i+1}$. It is clear that $\vert S \vert = g(\frac{kn}{2}, \frac{n}{2})$. So the maximum possible value of $g(\frac{kn}{2},\, \frac{n}{2})$ is an upper bound for $\vert S \vert$ in our greedy algorithm.  According to Lemma \ref{fgeg} we have $g(\frac{kn}{2},\, \frac{n}{2}) \le f(\frac{kn}{2},\, \frac{n}{2})$ in which $f$ is a good function of order $k$, and finally according to Lemma \ref{mainlemma} we have $\frac{f(\frac{kn}{2},\, \frac{n}{2})}{\frac{n}{2}} \le (1- \frac{k!}{\prod_{i=0}^{k - 1}(\frac{k}{k - 1} + i)})$ which easily shows the correctness of our theorem.
\end{proof}

\begin{theo}

Suppose that $G=(X, Y)$ is a bipartite graph with |X|=|Y| and $\delta(G)\ge k$. Then the following holds:
$$\gamma_{t}(G) \le n(1- \frac{k!}{\prod_{i=0}^{k-1}(\frac{k}{k-1}+i)}).$$ \label{EqualBipartite}

\end{theo}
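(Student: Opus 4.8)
The plan is to mirror the proof of Theorem~\ref{reg_bipartite} almost line for line, the only genuine novelty being that $G$ is no longer assumed $k$-regular but only satisfies $\delta(G)\ge k$. As there, I would apply the greedy selection twice: first pick $S\subseteq X$ until $N(S)=Y$, then symmetrically pick $S'\subseteq Y$ until $N(S')=X$; since any spanning subgraph $H$ has $N_H(v)\subseteq N_G(v)$, a set covering one side in $H$ still covers it in $G$, and $S\cup S'$ is a total dominating set of $G$. Because $|X|=|Y|=n/2$, it suffices to show $|S|\le\frac n2\bigl(1-\frac{k!}{\prod_{i=0}^{k-1}(\frac{k}{k-1}+i)}\bigr)$, with the identical bound for $|S'|$. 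One should note at the outset that reducing to the regular case is not an option: a balanced bipartite graph with $\delta\ge k$ need not have a $k$-factor (e.g.\ the disjoint union of $K_{3,2}$ and $K_{2,3}$ is balanced with $\delta=2$ yet has no $2$-factor), so the estimate must come from analyzing the greedy run on $G$ directly.

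The crucial adaptation is the choice of the quantity fed into the nice function. In the regular proof $M_i$ was the number of remaining edges, which there equals $k$ times the number of still-uncovered vertices of $Y$; when $\delta(G)\ge k$ these two numbers no longer agree and only the second is well behaved. I would therefore set $M_i=k\cdot(\text{number of uncovered vertices of }Y\text{ after step }i)$ and let $N_i$ be the number of non-isolated vertices of $X$. Then $k\mid M_i$ throughout, and since $\delta(G)\ge k\ge 1$ forces every vertex of $X$ to be non-isolated initially, we have $M_0=k|Y|=kn/2$ and $N_0=|X|=n/2$, exactly as in the regular setting.

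At step $i$ let $u\in X$ be of maximum degree, with $d(u)=\lceil m_i/n_i\rceil+y_i$ where $m_i$ is the current edge count and $y_i\ge0$. Every uncovered vertex of $Y$ still retains all of its $\ge k$ original edges, so $m_i\ge M_i$ and the pigeonhole estimate upgrades to $d(u)\ge\lceil M_i/N_i\rceil$, i.e.\ $d(u)=\lceil M_i/N_i\rceil+x_i$ for some integer $x_i\ge0$. Selecting $u$ covers its $d(u)$ currently uncovered neighbours, removing exactly $d(u)$ vertices from the uncovered part of $Y$, so $M_{i+1}=M_i-k\,d(u)=M_i-k(\lceil M_i/N_i\rceil+x_i)$, while $N_{i+1}=N_i-1$ records the departure of $u$, precisely as in Theorem~\ref{reg_bipartite}. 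These are the defining recurrences of a nice function $g$ of order $k$ with $x_i=y_{i+1}$, so $|S|=g(kn/2,\,n/2)$. Now Lemma~\ref{fgeg} gives $g(kn/2,\,n/2)\le f(kn/2,\,n/2)$ for the associated good function $f$, and Lemma~\ref{mainlemma} gives $f(kn/2,\,n/2)/(n/2)\le 1-\frac{k!}{\prod_{i=0}^{k-1}(\frac{k}{k-1}+i)}$; the same argument bounds $|S'|$, and summing yields $\gamma_t(G)\le|S\cup S'|\le n\bigl(1-\frac{k!}{\prod_{i=0}^{k-1}(\frac{k}{k-1}+i)}\bigr)$.

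I expect the main obstacle to be exactly the bookkeeping of the preceding paragraph: checking that the non-regular greedy is faithfully modelled by the nice-function recurrence. The delicate point is to track $k$ times the uncovered-$Y$ count rather than the true edge count---this keeps $M_i$ a multiple of $k$ and makes each step decrease $M_i$ by precisely $k\,d(u)$, whereas the hypothesis $\delta(G)\ge k$ is used only once, through the one-sided inequality $m_i\ge M_i$ that powers the pigeonhole bound. After this correspondence is in place, Lemma~\ref{monoton} together with Lemmas~\ref{fgeg} and~\ref{mainlemma} close the argument with no additional computation.
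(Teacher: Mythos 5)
Your proof is correct, but it takes a genuinely different route from the paper's. The paper does not redo the nice-function analysis on $G$ at all: it introduces an auxiliary $k$-regular bipartite graph $G'$ with $|X'|=|X|$, uses Lemma \ref{monoton} to argue that the worst-case greedy run on $G'$ is the one in which every chosen vertex has degree exactly $\lceil m_i/n_i\rceil$, and then proves a step-by-step coupling between the two runs: if $R$ and $S$ are the sets being built in $G$ and $G'$ respectively, then the number of covered vertices satisfies $|N(R)|\ge|N(S)|$ after every step (the comparison inequality reduces to $b(n-i-k)\ge 0$), whence $|R|\le|S|\le p$ by the bound already established for regular graphs in Theorem \ref{reg_bipartite}. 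You instead show that the greedy run on the irregular graph $G$ is \emph{itself} an instance of a nice function, the key adaptation being to track $M_i=k\cdot(\text{number of uncovered vertices of }Y)$ rather than the true edge count $m_i$ (which no longer drops by exactly $k\,d(u)$ per step when $G$ is not regular); the hypothesis $\delta(G)\ge k$ enters only through $m_i\ge M_i$, which preserves the pigeonhole bound $d(u)\ge\lceil M_i/N_i\rceil$, and then Lemmas \ref{fgeg} and \ref{mainlemma} finish directly. Your route is more self-contained: it makes Theorem \ref{reg_bipartite} a special case rather than a stepping stone, and it avoids the coupling argument, which is the least rigorous part of the paper's proof. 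One small repair is needed: since several vertices of $X$ can become isolated in a single step, your $N_i$ (defined as the number of non-isolated vertices of $X$) need not satisfy $N_{i+1}=N_i-1$ exactly, which the nice-function recurrence requires; instead set $N_i=\frac{n}{2}-i$ in the recurrence and observe that the true non-isolated count $n_i$ satisfies $n_i\le N_i$, so $d(u)\ge\lceil m_i/n_i\rceil\ge\lceil M_i/N_i\rceil$ still holds and the slack is absorbed into $x_i$. The paper's own proof of Theorem \ref{reg_bipartite} glosses over the same point, so this is bookkeeping, not a gap in substance.
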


\begin{proof}
To prove this, we will use the same greedy algorithm as described in Theorem \ref{reg_bipartite} to choose $R \subseteq X$ such that $N(R)=Y$. Suppose that $ p =\frac{n}{2} - \frac {nk!}{2(\prod_{i=0}^{k-1}(\frac{k}{k - 1} + i))}$. By symmetry if we prove that $|R|\le p$, then similarly we can choose $R^{\prime}\subseteq Y$ such that $N(R^{\prime})=X$  and $|R^{\prime}|\le p$. Thus $R	\cup R^{\prime}$  will be a total dominating set for $G$ with the desired size. Now, we prove that $|R|\le p$ . To do this we compare the graph $G$ with $G^{\prime}=(X^{\prime}, Y^{\prime})$ which is a $k$-regular bipartite graph with $|X^{\prime}|=|X|$. Now, we apply our algorithm on $G^{\prime}$ and try to find $S \subseteq X^{\prime}$ such that $N(S)=Y^{\prime}$. We denote the number of non-isolated vertices of $X^{\prime}$ in the $i$th step of the algorithm by $n_{i}$ and the number of remaining edges of $G^{\prime}$ by $m_{i}$. It can be derived from Lemma \ref{monoton} that in each step of choosing the vertices of $S$, if the degree of the chosen vertex is as close to $\lceil\frac{m_{i}}{n_{i}}\rceil$ as possible then the number of steps of the algorithm and therefore the size of $S$ becomes as large as possible. Now, suppose that in each step $i$ of the algorithm the degree of the chosen vertex from $X^{\prime}$ is $\lceil\frac{m_{i}}{n_{i}}\rceil$. Now, if we show that $|R|\le|S|$ the proof will be complete since we have proved before that $|S| \le p$. To do this, we show that in each step of the algorithm $N(R)\ge N(S)$. By contradiction suppose that $i$ is the smallest number such that after the $(i+1)th$ step of the algorithm,  $N(R)<N(S)$.  Suppose that after the $ith$ step, $N(R)=a+b(b\ge0)$ and $N(S)=a$. Now we show that after the $(i+1)th$ step, $N(R)<N(S)$ cannot hold. To do this we must show that the following inequality cannot hold:

$$a + b + \lceil\frac{(k(n - a - b))}{(n-i)}\rceil < a + \lceil\frac{k(n-a)}{(n-i)}\rceil.$$ 

The left hand side of the inequality shows the minimum possible value of $N(R)$ after the $(i+1)th$ step and the right hand side shows the exact value of $N(S)$ after the $(i+1)th$ step.

After simplifying the inequality we obtain that $b(n-i-k)<0$. But this is obviously wrong since $b\ge0$ and also $n\ge i+k$ because after $ith$ step there still remains a vertex in $Y$ and the degree of that vertex is more or equal to $k$. So we have $n \ge i+k$. Therefore the proof is complete. 
\end{proof}

\begin{theo}
Suppose that $G=(X, Y)$ is a bipartite graph of order $n$ and $\delta(G)\ge k$. Then the following holds:
$$\gamma_{t}(G) \le n(1- \frac{k!}{\prod_{i=0}^{k-1}(\frac{k}{k-1}+i)}).$$
\end{theo}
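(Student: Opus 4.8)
The plan is to deduce this general statement from Theorems \ref{reg_bipartite} and \ref{EqualBipartite} by treating the two colour classes as two independent domination targets. Write $G=(X,Y)$ with $|X|+|Y|=n$ and set $c=\frac{k!}{\prod_{i=0}^{k-1}(\frac{k}{k-1}+i)}$, so the claimed bound is $\gamma_{t}(G)\le n(1-c)$. Running the greedy procedure of Algorithm \ref{alg1} once produces $R\subseteq X$ with $N(R)=Y$, and running its symmetric version produces $R'\subseteq Y$ with $N(R')=X$; then $S=R\cup R'$ is a total dominating set and $\gamma_{t}(G)\le |R|+|R'|$. Hence the theorem reduces to the two \emph{target-size} estimates
$$|R|\le |Y|(1-c)\qquad\text{and}\qquad |R'|\le |X|(1-c),$$
whose sum is exactly $(|X|+|Y|)(1-c)=n(1-c)$.

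By symmetry I discuss only the bound on $|R|$; that on $|R'|$ follows after exchanging the roles of $X$ and $Y$. Two regimes arise, according to whether the class we select from is no larger, or strictly larger, than the class we cover. Consider first the case in which the selecting class is the smaller one, say $|X|\le|Y|$ for $R$. I set up a nice function of order $k$ exactly as in Theorem \ref{reg_bipartite}, but with $M_{0}=k|Y|$ and $N_{0}=|X|$: at step $i$ the quantity $m_{i}$ counts the edges still incident to uncovered vertices of $Y$, $n_{i}$ counts the still-active vertices of $X$, the chosen vertex has degree $\lceil m_{i}/n_{i}\rceil+y_{i}$ with $y_{i}\ge 0$, and each newly covered vertex of $Y$ removes at least $k$ edges because $\delta(G)\ge k$. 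Thus $|R|=g(k|Y|,|X|)$ for this nice function, and Lemma \ref{fgeg} gives $|R|\le f(k|Y|,|X|)$ for the associated good function.

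To finish this regime I would prove a monotonicity statement companion to Lemma \ref{monoton}: for a fixed first argument, $f$ is nondecreasing in its second argument, since enlarging $N_{0}$ only decreases each ceiling $\lceil M_{i}/N_{i}\rceil$ and therefore lengthens the recursion. Granting this, the inequality $|X|\le|Y|$ yields $f(k|Y|,|X|)\le f(k|Y|,|Y|)=f(kn',n')$ with $n'=|Y|\ge k$, and Lemma \ref{mainlemma} gives $f(kn',n')\le n'(1-c)=|Y|(1-c)$, which is the required bound.

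The step I expect to be the real obstacle is the opposite regime — selecting from the \emph{larger} class to cover the \emph{smaller} one — which is unavoidable for exactly one of $R,R'$ whenever $|X|\neq|Y|$. There the crude pigeonhole degree $\lceil m_{i}/n_{i}\rceil$ can drop below $k$, so the good-function model over-counts the number of greedy steps and the argument above is no longer immediate. The saving feature is that $\delta(G)\ge k$ is imposed on \emph{both} classes at once: if, say, $|Y|>|X|$, then $G$ has at least $k|Y|>k|X|$ edges, so the vertices of the smaller class $X$ have average degree strictly above $k$ and are drained faster than in the balanced case. I would make this rigorous in one of two ways — either by the coverage-comparison induction of Theorem \ref{EqualBipartite}, comparing the greedy run on $G$ with the greedy run on a $k$-regular bipartite graph whose two parts both have size equal to the covered class and checking that $G$ never covers more slowly, or by first extracting a subset of the selecting class, of cardinality at most that of the covered class, in which every covered vertex still has at least $k$ neighbours, and then invoking the already-settled "smaller side selects" regime. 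Verifying that the coverage-domination inequality of Theorem \ref{EqualBipartite} survives when the number of active selecting vertices is no longer pinned to (covered-class size)$-i$, or equivalently establishing the existence of the required thinned-out subset, is the crux; once it is in hand, the monotonicity argument above and the final summation are routine.
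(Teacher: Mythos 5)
Your proposal does not close the proof: you yourself flag the decisive case --- selecting vertices from the \emph{larger} part to cover the \emph{smaller} one --- as ``the crux,'' and then offer only two unverified strategies for it. That case is unavoidable (it occurs for one of $R$, $R'$ whenever $|X|\ne|Y|$), and it is genuinely where the difficulty sits: once the number $n_i$ of active selecting vertices is no longer tied to the size of the class being covered, the pigeonhole ratio $\lceil m_i/n_i\rceil$ can drop below $k$, and none of Lemma \ref{monoton}, Lemma \ref{mainlemma}, or Lemma \ref{fgeg} applies as stated. Neither of your two suggested repairs is carried out, so what you have is a program, not a proof. There are also secondary gaps in the regime you do treat: you assert $|R|=g(k|Y|,|X|)$ for a nice function, but the nice-function recursion requires that \emph{exactly} $k(\lceil M_i/N_i\rceil+x_i)$ edges disappear per step, whereas in a non-regular graph with $\delta(G)\ge k$ each newly covered vertex of $Y$ may lose more than $k$ edges; and the monotonicity of $f$ in its \emph{second} argument, which your chain $f(k|Y|,|X|)\le f(k|Y|,|Y|)$ relies on, is asserted but never proved (Lemma \ref{monoton} is monotonicity in the first argument only).

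The paper sidesteps all of this with a short doubling argument, which is the idea your approach is missing. Writing $c=\frac{k!}{\prod_{i=0}^{k-1}(\frac{k}{k-1}+i)}$ as you do, form $G'$ from two disjoint copies $A=(X_1,Y_1)$ and $B=(X_2,Y_2)$ of $G$, but cross the parts: $X'=X_1\cup Y_2$ and $Y'=Y_1\cup X_2$. Then $G'$ is bipartite of order $2n$ with $|X'|=|Y'|=n$ and $\delta(G')\ge k$, so Theorem \ref{EqualBipartite} gives a total dominating set $S$ of $G'$ with $|S|\le 2n(1-c)$. Since $A$ and $B$ are unions of connected components of $G'$, the intersection of $S$ with each copy is itself a total dominating set of that copy, and by pigeonhole one of the two intersections has size at most $n(1-c)$; as each copy is isomorphic to $G$, this yields $\gamma_t(G)\le n(1-c)$. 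In other words, the imbalance between $|X|$ and $|Y|$ is never confronted head-on: it is cancelled by pairing each part of $G$ with the opposite part of a second copy, precisely so that the balanced-case theorem can be invoked.
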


\begin{proof}
To prove this theorem, we put two copies of graph $G$, called $A=(X_{1}, Y_{1})$ and $B=(X_{2}, Y_{2})$ together to form graph $G’=(X’, Y’)$.
We do this in a way that $X’=X_{1} \cup Y_{2}$ and $Y’=Y_{1} \cup X_{2}$. Then according to Theorem \ref{EqualBipartite}, It is evident that there is $S  \subseteq X’ \cup Y’$ such that $S$ is a total dominating set for $G’$ and $|S|<2n(1- \frac{k!}{\prod_{i=0}^{k-1}(\frac{k}{k-1}+i)})$. It is obvious that the intersection of $S$ with at least one of $A$ or $B$ is at most $n(1- \frac{k!}{\prod_{i=0}^{k-1}(\frac{k}{k-1}+i)})$. Assume that it is $A$, so $S \cap (X_{1} \cup Y_{1})$ is a total dominating set for $A$ with the required size and since $A$ is a copy of $G$, it is also a total dominating set for $G$.
\end{proof}

In the next section, we show that our result, improves Theorem \ref{MHen}.

\section{Proof of improvement over Theorem \ref{MHen}}

\begin{theo}
Let $n$ be a positive integer. Then the following holds:
$$ \frac{1+ln(n)}{n} > 1-\frac{n!}{\prod_{i=1}^{n-1} \frac{n}{n-1} +i}.$$

\end{theo}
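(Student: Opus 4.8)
The plan is to reduce the claimed inequality to an elementary estimate on a harmonic-type sum and then control that sum by a convexity comparison, since the two sides turn out to be close enough that only a sharp estimate will succeed. Writing $n$ in place of $k$, I would first rewrite the right-hand quantity (the bound from the previous theorems) in product form. Since $\frac{n}{n-1}+i=(i+1)+\frac{1}{n-1}$, the product $\prod_{i=0}^{n-1}\left(\frac{n}{n-1}+i\right)$ equals $\prod_{a=1}^{n}\left(a+\frac{1}{n-1}\right)$, so that
\[
\frac{n!}{\prod_{i=0}^{n-1}\left(\frac{n}{n-1}+i\right)}=\prod_{a=1}^{n}\frac{a(n-1)}{a(n-1)+1}=\prod_{a=1}^{n}\left(1-\frac{1}{a(n-1)+1}\right).
\]
Setting $x_{a}=\frac{1}{a(n-1)+1}\in(0,1)$ and using the Weierstrass bound $1-\prod_{a}(1-x_{a})\le\sum_{a}x_{a}$, the right-hand side of the theorem is at most $\sum_{a=1}^{n}\frac{1}{a(n-1)+1}$. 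Thus it suffices to prove the stronger, purely numerical inequality $\sum_{a=1}^{n}\frac{1}{a(n-1)+1}<\frac{1+\ln n}{n}$.

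Next I would peel off the $a=1$ term, which is exactly $\frac{1}{n}$, reducing the goal to the tail estimate $\sum_{a=2}^{n}\frac{1}{a(n-1)+1}<\frac{\ln n}{n}$. Here I would resist dropping the $+1$ in the denominator or invoking $H_{n}\le 1+\ln n$: both are too lossy and in fact reverse the inequality for small $n$. Instead, since $g(x)=\frac{1}{x(n-1)+1}$ is convex for $x\ge 0$, the Hermite--Hadamard inequality gives $g(a)\le\int_{a-1/2}^{a+1/2}g(x)\,dx$, and summing over $a=2,\dots,n$ telescopes to $\int_{3/2}^{n+1/2}g$. Evaluating the integral yields
\[
\sum_{a=2}^{n}\frac{1}{a(n-1)+1}\le\frac{1}{n-1}\ln\frac{2n^{2}-n+1}{3n-1}.
\]

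It then remains to verify the elementary inequality $n\ln\frac{2n^{2}-n+1}{3n-1}<(n-1)\ln n$ for every integer $n\ge 2$ (the case $n=1$ being degenerate, and $n=k>1$ the only relevant range). For large $n$ this is comfortable, as $\frac{2n^{2}-n+1}{3n-1}\sim\frac{2n}{3}$ makes the left side about $n\ln n-0.405\,n$ against $n\ln n-\ln n$ on the right, a gap that grows linearly in $n$; I would make this rigorous by showing the difference $(n-1)\ln n-n\ln\frac{2n^{2}-n+1}{3n-1}$ is eventually increasing via the real-variable derivative, and then checking the finitely many small values $n=2,3,\dots$ directly.

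The main obstacle is the tightness of the estimate near $n=2$: the original inequality, the Weierstrass step, and the final numerical inequality all carry slack only of order $1/n$ (ultimately it is the fact that the asymptotic gap $\gamma$ between $H_{n}$ and $\ln n$ satisfies $\gamma<1$ that saves the argument), so every intermediate bound must be sharp. The convexity comparison is chosen precisely for this reason: the cruder left-endpoint integral bound $\int_{1}^{n}g$ and the plain harmonic bound both overshoot and fail already at $n=3$, whereas the midpoint rule is exactly strong enough. Consequently, a careful treatment of the smallest cases, where the margins are thinnest, is the delicate part of the proof.
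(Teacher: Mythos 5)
Your proposal is correct, and it takes a genuinely different route from the paper's own proof. The paper starts with the same rewriting of $\frac{n!}{\prod_{i=0}^{n-1}(\frac{n}{n-1}+i)}$ as $1\big/\prod_{a=1}^{n}\bigl(1+\frac{1}{a(n-1)}\bigr)$, but then bounds the product using $1+x<e^{x}$ and $H_{n}<1+\ln n$, reducing the claim to the positivity of $f(x)=\frac{1+\ln x}{x}+e^{-\frac{1+\ln x}{x-1}}-1$ for $x>10$; this is established by a fairly heavy computation showing $f'(x)<0$ there together with $f(x)\to 0$ as $x\to\infty$, while the cases $n\le 10$ are dismissed as ``obvious'' (i.e.\ finite checking). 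You linearize in the opposite direction with the Weierstrass inequality $1-\prod_{a}(1-x_{a})\le\sum_{a}x_{a}$, peel off the $a=1$ term, and control the tail by the Hermite--Hadamard midpoint bound, reducing everything to the elementary inequality $n\ln\frac{2n^{2}-n+1}{3n-1}<(n-1)\ln n$. This buys uniformity (no case split at $n=10$, no asymptotic derivative analysis of a transcendental function), and your diagnosis of why the cruder bounds cannot work is accurate: the plain harmonic bound and the left-endpoint integral both fail at $n=3$, whereas the midpoint bound survives the tightest cases ($n=2$: $\ln\frac{7}{5}\approx0.337<\frac{\ln 2}{2}\approx0.347$; $n=3$: $\frac{\ln 2}{2}\approx0.347<\frac{\ln 3}{3}\approx0.366$). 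The only incomplete point is that your final inequality is left as a sketch (``eventually increasing derivative plus finitely many checks''); it is true, and in fact closes more simply than your plan suggests: for $n\ge 3$, clearing denominators shows $\frac{2n^{2}-n+1}{3n-1}\le\frac{2n}{3}$ is equivalent to $3\le n$, so it suffices to have $\ln n<n\ln\frac{3}{2}$, which follows from $\frac{\ln n}{n}\le\frac{1}{e}<\ln\frac{3}{2}$; the remaining case $n=2$ reads $\ln\frac{49}{25}<\ln 2$. With that one-line finish, your argument is complete and arguably cleaner than the paper's.
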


\begin{proof}
For $n\le10$ the assertion is obvious. Now, we try to prove the inequality for $n>10$.
$$ \frac{1+ln(n)}{n} + \frac{n!}{\prod_{i=0}^{n-1} \frac{n}{n-1} +i} > 1 .$$

We have:

$$ \frac{n!}{\prod_{i=0}^{n-1} \frac{n}{n-1} +i} = \frac{1}{{\prod_{i=1}^{n} 1+\frac{1}{i(n-1)} }} .$$
Now, by considering $1 + x < e^{x}$, we rewrite the inequality as follows:

$$\frac{1+ln(n)}{n} + \frac{1}{e^{\frac{1}{n-1} \sum_{i=1}^{n} \frac{1}{i}}} > 1.$$

Since $\sum^{n}_{i=1} \frac{1}{i}<ln(n)+1$, we have:

$$\frac{1+ln(n)}{n} + \frac{1}{e^\frac{ln(n)+1}{n-1}} > 1  .$$

We declare, 

$$ f(x) = \frac{1+ln(x)}{x} + \frac{1}{e^\frac{ln(x)+1}{x-1}} - 1 .$$

Now, we rewrite the $f(x)$ as follows:

$$f(x) = \frac{1}{(xe)^\frac{1}{x-1}}+ \frac{ln(x)+1}{x}-1 .$$

It is clear that when $x$ tends to infinity, the limit of this function is $0$. So if we prove that the derivative of this function is negative for $x>10$, we can conclude that the value of this function remains positive for all $x>10$ because we know that the value of function is positive at $x=10$. 
Now, we prove that the derivative of $f(x)$ is negative for $x>10$. We have,

$$f^\prime (x)=\frac{x^{\frac{2-3x}{x-1}} (e^\frac{1}{1-x} x^2 - ((x-1)^2 x^\frac{x}{x-1} - e^\frac{1}{1-x} x^3)ln(x))}{(x-1)^2}.$$

So we have to show that the following inequality holds,

$$e^\frac{1}{1-x} x^2 - ((x-1)^2 x^\frac{x}{x-1} - e^\frac{1}{1-x} x^3)ln(x) < 0 .$$

By multipliyng both sides of the inequality by $x^{-1}e^\frac{1}{x-1}$ and considering $x^\frac{1}{x-1}=e^\frac{ln(x)}{x-1}$, we can obtain the following inequality,

$$x(xln(x) + 1) < (x-1)^2 e^\frac{ln(x)+1}{x-1} ln(x) .$$

We know that $e^ \frac{ln(x)+1}{x-1}>\frac{ln(x)+1}{x-1}+1=\frac{ln(x) + x}{x-1}$. Hence the inequality can be written as,

$$x(xln(x)+1) < (x-1)(ln(x)+x)ln(x) .$$

So the following holds,

$$ 0 < x(ln^2(x)  - ln(x) - 1) - ln^2(x) .$$
Since $ln^2(x) - ln(x) - 1$ is bigger than 1 and for $x \ge 10$, $x > ln^2(x)$, the latter inequality holds.  So the proof is complete.
\end{proof}

\bibliographystyle{amsplain} 

\begin{thebibliography}{5} 
\bibitem{Alon} N. Alon and J.H. Spencer. The Probabilistic Methods, Wiley, 3 edition, 2008.
\bibitem{origintotal} E.J. Cockayne, R.M. Dawes, S.T. Hedetniemi, Total domination in graphs, Networks 10 (1980) 211-219.
\bibitem{FundDom} T.W. Haynes, S.T. Hedetniemi, P.J. Slater (Eds.), Fundamentals of Domination in Graphs, Marcel Dekker, Inc., New York, 1998.
\bibitem{Dom} T.W. Haynes, S.T. Hedetniemi, P.J. Slater (Eds.), Domination in Graphs: Advanced Topics, Marcel Dekker, New York, 1998.
\bibitem{Henning} M.A. Henning, A survey of selected recent results on total domination in graphs, Discrete Math., 309 (2009) 32-63.
\bibitem{Pfaff}J. Pfaff, R.C. Laskar, S.T. Hedetniemi, NP-completeness of total and connected domination and irredundance for bipartite graphs, Technical
Report 428, Clemson University, Dept. Math. Sciences, 1983.
\end{thebibliography}

\end{document}